\newtheorem{theorem}{Theorem}[section]
\newtheorem{lemma}[theorem]{Lemma}
\newtheorem{proposition}{Proposition}
\title{Optimal stability estimate in the inverse boundary value problem for periodic potentials with partial data}
\date{}
\author{Sombuddha Bhattacharyya\thanks{HKUST Jockey Club Institute for Advanced Study, The Hong Kong University of Science and Technology,
Clear Water Bay, Kowloon, Hong Kong; email: arkatifr@gmail.com} \and C\u{a}t\u{a}lin I. C\^{a}rstea\thanks{HKUST Jockey Club Institute for Advanced Study, The Hong Kong University of Science and Technology,
Clear Water Bay, Kowloon, Hong Kong; email: catalin.carstea@gmail.com}}
\newcommand{\p}[1]{{#1}^{\prime}}
\newcommand{\R}{\mathbb{R}}
\newcommand{\Z}{\mathbb{Z}}
\newcommand{\mH}{\mathcal{H}}
\newcommand{\mT}{\mathcal{T}}
\newcommand{\mU}{\mathcal{U}}
\newcommand{\D}{\text{ d}}
\newcommand{\re}{\mathfrak{Re}}
\begin{document}
\maketitle

\begin{abstract}
We consider the inverse boundary value problem for operators of the form $-\triangle+q$ in an infinite domain $\Omega=\R\times\omega\subset\R^{1+n}$, $n\geq3$, with a periodic potential $q$. For Dirichlet-to-Neumann data localized on a portion of the boundary of the form $\Gamma_1=\R\times\gamma_1$, with $\gamma_1$ being  the complement either of a flat or spherical portion of $\partial\omega$, we prove that a log-type stability estimate holds.
\end{abstract}

\section{Introduction}

For an equation of the type
\begin{equation}
-\triangle u(x)+q(x)u(x)=0,\quad x\in\Omega,
\end{equation}
the inverse boundary value problem is the question of determining the potential $q$, given knowledge of pairs $(u|_{\partial\Omega}, \partial_\nu u|_{\partial\Omega})$ of Dirichlet and Neumann data, either on the whole boundary, or on some proper subset of it.  One way to encode the given information is as the Dirichlet-to-Neumann map $\Lambda_q: u|_{\partial\Omega}\to \partial_\nu u|_{\partial\Omega}$. An interesting sub-problem is the one of uniqueness, i.e. showing that if $\Lambda_{q_1}=\Lambda_{q_2}$, then $q_1=q_2$. A more general question is that of stability: showing that a suitable norm $||q_1-q_2||$ of the difference of two potentials can be controled by a suitable operator norm $||\Lambda_{q_1}-\Lambda_{q_2}||_\ast$ of the difference of the corresponding Dirichlet-to-Neumann maps, through an estimate of the form
\begin{equation}
||q_1-q_2||\leq \phi (||\Lambda_{q_1}-\Lambda_{q_2}||_\ast),\quad \text{where }\lim_{s\searrow 0}\phi(s)=0.
\end{equation}

When full boundary data is given, log-type ($\phi(s)=|\log(s)|^{-\sigma}$) stability estimates have been obtained (see \cite{A}). In \cite{Ma} it has been shown that log-type stability is optimal. For partial boundary data,  log-log-type ($\phi(s) =\left|\log|\log(s)|\right|^{-\sigma}$)  estimates have been obtained (see \cite{Car}, \cite{CDR_2}, \cite{CDR_1},  \cite{CS}, \cite{HW2006}, \cite{Lai}, \cite{Tz}), as well as log-type estimates (see \cite{AK}, \cite{bJ}, \cite{Car}, \cite{Fa}, \cite{HW}). 

The result of Heck and Wang \cite{HW}, where they consider the case of a bounded domain in three or more dimensions and boundary data on a portion of the boundary whose complement is either flat or spherical. In that instance they obtain a log-type stability result. This setup was used by Isakov in \cite{Isakov2007} to prove a uniqueness result. In \cite{Car} a similar method is used to prove a log-type stability result with partial data in the case of electromagnetism. In  \cite{AK}, \cite{bJ}, \cite{Fa} different methods are used, but with the assumption that the unknown coefficients are known near the boundary. In this paper we will follow the method in \cite{HW} to prove a log-type stability result.

Suppose $\omega\subset\R^n$, $n\geq3$, is a bounded domain  with $C^2$-boundary. The domain for the problem we will consider here is an infinite cylinder of the form  $\Omega = \R \times \omega$.  We will denote $\gamma=\partial\omega$ and $\Gamma=\partial\Omega=\R\times\gamma$.  

We consider two types of geometry for $\omega$:
\begin{enumerate}
	\item[(a)] $\omega \subset \{ x_n < 0 \}$ is such that $\gamma_0 = \partial\omega \cap \{  x_n = 0 \}\neq\emptyset$,
	\item[(b)] $\omega \subset B(a,R) = \{x\in\R^n:\lvert x-a \rvert < R\}$ is such that $\gamma_0 = \partial\omega \cap \partial B(a,R)\neq\emptyset$, $\gamma_0 \neq \partial B(a,R)$.
\end{enumerate}
In each of these cases let $\Gamma_0=\R\times\gamma_0$, $\gamma_1=\gamma\setminus\gamma_0$, $\Gamma_1=\R\times\gamma_1=\Gamma\setminus\Gamma_0$.

Let $q \in L^{\infty}(\Omega)$ be real valued and such that
\begin{equation}\label{periodic}
q(x_0+1,\p{x}) = q(x_0,\p{x}), \quad \forall x_0\in\R, x'\in\omega.
\end{equation} 
We consider the following boundary value problem
\begin{equation}\label{BVP}
\left\{\begin{array}{l}
%\mL_q u=
(-\triangle + q)u = 0	\quad 	\mbox{in }\Omega,\\
			 u|_{\Gamma} = f.
\end{array}\right.
\end{equation}
The Dirichlet-to-Neumann map $\Lambda_q$ assigns to the Dirichlet data $f$ the corresponding Neumann data $\Lambda_q(f)=\partial_\nu u|_{\Gamma}$. If we only consider data supported in the open subset $\Gamma_1\subset\Gamma$ of the boundary, then we can define the local Dirichlet-to-Neumann map
\begin{equation}
\Lambda_{q,\Gamma_1}(f) = \partial_{\nu}u\mid_{\Gamma_1}, \quad \text{where }\mbox{supp}(f) \subset \Gamma_1.
\end{equation}

Infinite cylinder domains of this type have been considered in \cite{BKS},   \cite{CKS-t}, \cite{ChS}, \cite{KKS}, \cite{Ki}, \cite{KPS_2}, \cite{KPS_1}, in both  static and time dependent cases.
In \cite{CKS_1}, \cite{CKS_2}, a log-log-type stability result for the potential problem has been obtained.

We will exploit the fact that the potential is periodic in the $x_1$ variable and convert the boundary value problem \eqref{BVP} into a problem on a bounded domain. Then we will establish a relation between the Dirichlet-to-Neumann map for the two problems and then we will prove a stability estimate for the converted problem in the bounded domain and hence we will prove the main result of this article.

\subsection{Main results}

Let
\begin{equation}
C_{\omega} = \sup \{ c>0 : \lVert \nabla u \rVert_{L^2(\omega)} \geq c\lVert u \rVert_{L^2(\omega)}; \forall u \in H^1_{0}(\omega) \},
\end{equation}
 and pick constants $0 < M_{-} < C_{\omega}$, $M_{+} \geq M_{-}$. We will consider potentials in the class
\begin{multline}
\mathcal{V}(M_\pm) = \{ q \in L^{\infty}(\Omega) :\\ q\text{ satisfies\eqref{periodic}, } \lVert q \rVert_{L^{\infty}(\Omega)} \leq M_{+}, \lVert \max(0,-q)\rVert_{L^{\infty}(\Omega)} \leq M_{-} \}.
\end{multline}

We will make use of spaces of the type $\mH^{r,s}(\R \times Y) = H^{r}(\R; H^{s}(Y))$,  where $r,s\geq0$, and $Y \subset \R^n$ could stand for $\omega$, $\gamma$, $\gamma_1$, etc.  Similarly let $\mH^{r,s}_{0}(\R\times Y) = H^{r}(\R ; H^{s}_{0}(Y))$. By $\mH^{-r,-s}(\R\times Y)$ we will denote the dual of $\mH^{r,s}_{0}(\R\times Y)$. We also define the space 
\begin{equation}
H_{\triangle}(\Omega) =  \{ u\in L^2(\Omega): \triangle u \in L^2(\Omega) \},
\end{equation}
with the norm 
\begin{equation}
\lVert u \rVert_{H_{\triangle}(\Omega)}^2 = \lVert u \rVert_{L^2(\Omega)}^2 + \lVert \triangle u \rVert_{L^2(\Omega)}^2.
\end{equation}

For functions $\phi\in C^\infty_0(\bar\Omega)$  we can define the trace operators $\mT_0(\phi) = \phi|_{\Gamma}$ and $\mT_1(\phi) = \partial_\nu\phi|_{\Gamma}$. These extend (see \cite{CKS_1}, Lemma 2.2) to bounded linear operators $\mT_0:H_\triangle(\Omega)\to\mH^{-2,-\frac{1}{2}}(\Gamma)$ and $\mT_1:H_\triangle(\Omega)\to\mH^{-2,-\frac{3}{2}}(\Gamma)$. 

Since $H_\triangle(\Omega)$ is a larger space than $H^2(\Omega)$, it is not entirely straightforward to identify the range of these trace maps in terms of classic function spaces. We will define $\mathscr{H}(\Gamma) = \mT_0 H_\triangle(\Omega)$ as a set. Noticing that $\mT_0$ becomes a bijection onto $\mathscr{H}(\Gamma)$ when restricted to $\mathcal{D} = \{ u\in H_{\triangle}(\Omega) : \triangle u = 0 \}$ (see \cite{CKS_1}, Lemma 2.3), we  endow $\mathscr{H}(\Gamma)$ with the topology $\mathcal{D}$ induces on it through $\mT_0$.

Before stating our stability theorem we need to clarify the well-posedness of the direct problem and give a precise definition of the Dirichlet-to-Neumann map. We have that

\begin{proposition} \label{direct-problem}
	Given fixed $M_+$, $M_-$ and $q \in \mathcal{V}(M_\pm)$
	\begin{enumerate}
		\item[(a)]	For any $f \in \mathscr{H}(\Gamma)$, there is unique $u \in H_{\triangle}(\Omega)$ solving \eqref{BVP} and $C>0$ depending on $\omega$ and $M_{+}, M_{-}$ such that  
				\begin{equation}\begin{aligned}
					\lVert u \rVert_{L^2(\Omega)} 
					\leq C\lVert f\rVert_{\mathscr{H}(\Gamma)}.	
				\end{aligned}\end{equation} 
		\item[(b)]	The Dirichlet to Neumann map $\Lambda_{q} : f \to \mT_{1}$ is a bounded operator from $\mathscr{H}(\Gamma)$ into $\mH^{-2,-\frac{3}{2}}(\Gamma)$.
		\item[(c)]	For any $\tilde{q} \in \mathcal{V}$, the operator $\Lambda_{q} - \Lambda_{\tilde{q}}$ is bounded from $\mathscr{H}(\Gamma)$ to $L^2(\Gamma)$.
	\end{enumerate}
\end{proposition}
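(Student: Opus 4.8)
The plan is to reduce all three statements to the study of an inhomogeneous Dirichlet problem with $L^2$ forcing and zero boundary data, to which the variational machinery applies. Given $f\in\mathscr{H}(\Gamma)$, let $w\in\mathcal{D}$ be the unique harmonic representative with $\mT_0 w=f$, furnished by the bijectivity of $\mT_0|_{\mathcal{D}}$; by the very definition of the topology on $\mathscr{H}(\Gamma)$ one has $\lVert w\rVert_{H_\triangle(\Omega)}=\lVert f\rVert_{\mathscr{H}(\Gamma)}$. Since $q\in L^\infty(\Omega)$ and $w\in L^2(\Omega)$, the product $qw$ lies in $L^2(\Omega)$. Writing $u=w+v$, solving \eqref{BVP} is then equivalent to solving
\begin{equation}
(-\triangle+q)v=-qw\ \text{ in }\Omega,\qquad v|_\Gamma=0,
\end{equation}
for $v$ in the space $H^1_0(\Omega)$ of finite-energy functions vanishing on the lateral boundary $\Gamma$.

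For part (a), I would obtain $v$ by the Lax--Milgram theorem applied to the bilinear form $B[v,\phi]=\int_\Omega(\nabla v\cdot\nabla\bar\phi+qv\bar\phi)$. Coercivity on $H^1_0(\Omega)$ comes from the cross-sectional Poincar\'e inequality $\lVert\nabla_{x'}\psi\rVert_{L^2(\omega)}\geq C_\omega\lVert\psi\rVert_{L^2(\omega)}$ for $\psi\in H^1_0(\omega)$: applying it to $v(x_0,\cdot)$ and integrating in $x_0$ controls $\lVert v\rVert_{L^2(\Omega)}$ by $\lVert\nabla v\rVert_{L^2(\Omega)}$, and the hypothesis $q\geq -M_-$ with $M_-$ below the threshold fixed by $C_\omega$ leaves a positive amount of the gradient energy. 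The resulting $v$ satisfies $\triangle v=qv+qw\in L^2(\Omega)$, so $u=w+v\in H_\triangle(\Omega)$, $\mT_0 u=f$, and $(-\triangle+q)u=0$; the chain $\lVert v\rVert_{L^2}\leq C\lVert qw\rVert_{L^2}\leq CM_+\lVert w\rVert_{L^2}\leq CM_+\lVert f\rVert_{\mathscr{H}(\Gamma)}$ yields the stated bound. For uniqueness one takes $f=0$ and must show that any $u\in H_\triangle(\Omega)$ with $(-\triangle+q)u=0$ and $\mT_0 u=0$ vanishes; here $\triangle u=qu\in L^2(\Omega)$, so this reduces to the uniqueness of the very weak solution of the Dirichlet problem $-\triangle u=-qu$ with zero trace, which I would settle by showing such a $u$ in fact belongs to $H^1_0(\Omega)$ (via Green's identity for the weak trace operators $\mT_0,\mT_1$ and the density underlying \cite{CKS_1}) and then invoking coercivity of $B$.

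Part (b) is then immediate: from the above $\lVert\triangle u\rVert_{L^2}=\lVert qu\rVert_{L^2}\leq M_+\lVert u\rVert_{L^2}$, so the solution map $f\mapsto u$ is bounded $\mathscr{H}(\Gamma)\to H_\triangle(\Omega)$, and $\Lambda_q=\mT_1\circ(f\mapsto u)$ inherits boundedness into $\mH^{-2,-\frac{3}{2}}(\Gamma)$ from the given mapping property of $\mT_1$. Part (c) is the substantive point. Writing $u,\tilde u$ for the solutions associated to $q,\tilde q$ with common data $f$, the difference $W=u-\tilde u$ has zero trace and satisfies $(-\triangle+q)W=-(q-\tilde q)\tilde u=:h\in L^2(\Omega)$, with $(\Lambda_q-\Lambda_{\tilde q})f=\mT_1 W$. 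Since $W$ now has honestly vanishing Dirichlet data and $L^2$ right-hand side, the gain is one of elliptic regularity: on the cylinder $\Omega=\R\times\omega$ with $C^2$ cross-section the Dirichlet problem enjoys global $H^2$ estimates, uniform in $x_0$ by translation invariance (equivalently, via the partial Fourier transform in $x_0$ reducing to a uniform family of problems on $\omega$), giving $W\in H^2(\Omega)$ with $\lVert W\rVert_{H^2(\Omega)}\leq C\lVert h\rVert_{L^2(\Omega)}\leq C\lVert\tilde u\rVert_{L^2(\Omega)}\leq C\lVert f\rVert_{\mathscr{H}(\Gamma)}$. The trace theorem then places $\partial_\nu W|_\Gamma=(\Lambda_q-\Lambda_{\tilde q})f$ in $H^{1/2}(\Gamma)\hookrightarrow L^2(\Gamma)$, which is the claim.

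The main obstacle I anticipate is twofold and concentrated in the weak, unbounded nature of the setting: making the uniqueness in (a) rigorous, since solutions live only in $H_\triangle(\Omega)$ and one cannot integrate by parts naively — this forces one to pass through the Green identity for the weak trace operators together with an approximation argument — and securing the boundary $H^2$ regularity in (c) with constants independent of $x_0$, which is precisely what upgrades the difference of Dirichlet-to-Neumann maps from $\mH^{-2,-\frac{3}{2}}(\Gamma)$ all the way to $L^2(\Gamma)$.
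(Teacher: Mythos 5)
Your proposal is correct and follows, in all essentials, the same route as the proof this paper relies on: the paper does not prove Proposition \ref{direct-problem} itself but defers to \cite[Proposition 1.1]{CKS_1}, whose argument is exactly your scheme — lift $f$ to the harmonic representative via the bijection $\mT_0|_{\mathcal{D}}$, solve the resulting zero-trace problem by coercivity of the form (the role of the condition $M_-<C_\omega$ together with the cross-sectional Poincar\'e inequality integrated in $x_0$), and, for part (c), exploit that the difference $W=u-\tilde u$ has vanishing Dirichlet data and $L^2$ right-hand side so that translation-invariant global $H^2$ regularity on the cylinder plus the trace theorem upgrade $\mT_1 W$ to $L^2(\Gamma)$. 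The technical points you flag — consistency of the weak traces $\mT_0,\mT_1$ with the $H^1_0$ and $H^2$ trace notions, and uniqueness via the Green identity/duality for $H_\triangle(\Omega)$ — are precisely the content of the auxiliary lemmas in \cite{CKS_1} on which the cited proof rests, so identifying them as the crux is accurate.
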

 
 This is identical to the statement of \cite[Proposition 1.1]{CKS_1}. Though there $\omega\subset\R^2$, their proof does not rely crucially on the dimension and does apply equally well to our $\omega\subset\R^n$ case. 
 %For the sake of completeness, we include a proof of Proposition \ref{direct-problem} in an appendix.

Let $||\cdot||_\ast=||\cdot||_{\mathscr{H}(\Gamma)\to L^2(\Gamma)}$.   Our main result is
\begin{theorem}\label{main-theorem}
	Let $\Omega = \R \times \omega \subset \R^{1+n}$, where $\omega$ satisfies one of the geometry constraints  $(a)$ or $(b)$. Let $M_+$, $M_-$, $N$ be fixed. If $q_1, q_2 \in \mathcal{V}\cap H^s((0,1)\times\omega)$ for an $s>\frac{1+n}{2}$, and $||q_1||_{H^s((0,1)\times\omega)},||q_2||_{H^s((0,1)\times\omega)}<N$,
	then there exists $C>0$ and $\sigma>0$  such that 
	\begin{equation}\label{stability-estimate}
	\lVert q_1 - q_2 \rVert_{L^\infty(\Omega)} 
	\leq C\left\lvert \log \lVert\Lambda_{q_1,\Gamma_1} - \Lambda_{q_2,\Gamma_1}\rVert_{*} \right\rvert^{-\sigma}.
	\end{equation} 
\end{theorem}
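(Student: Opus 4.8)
The plan is to reduce the infinite-cylinder problem with periodic potential to a problem on a bounded domain, and then import the partial-data log-stability machinery of Heck–Wang. The overall strategy has three stages.
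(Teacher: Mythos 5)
Your proposal names a strategy but does not execute it: it announces a reduction to a bounded domain and an appeal to Heck--Wang, promises ``three stages,'' and then stops. There is no construction, no estimate, and no step that can be verified, so as it stands this is not a proof but a table of contents. The strategy you name is indeed the paper's strategy (fiber decomposition to a bounded domain, Heck--Wang-style partial-data argument, and---though you do not mention it---a Kelvin transform for the spherical case (b)), but the entire mathematical content lies in the parts you skip, and at least one of them is genuinely non-routine.

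Concretely, three things are missing. First, the reduction itself: you must introduce the Floquet--Bloch--Gel'fand transform $\mU$, show it conjugates $\Lambda_{q,\Gamma_1}$ into a direct integral of fibered maps $\Lambda_{q,\check\gamma_1,\theta}$ on $\check\omega=(0,1)\times\omega$, and establish the norm identity $\lVert \Lambda_{q_1,\Gamma_1}-\Lambda_{q_2,\Gamma_1}\rVert_* = \sup_{\theta}\lVert \Lambda_{q_1,\check\gamma_1,\theta}-\Lambda_{q_2,\check\gamma_1,\theta}\rVert_*$, so that a fiberwise estimate uniform in $\theta$ yields the cylinder estimate. Second, and most importantly, the Heck--Wang machinery cannot simply be ``imported'': the fibered problem carries quasi-periodic boundary conditions $u(1,\cdot)=e^{i\theta}u(0,\cdot)$, which force the $x_0$-component of the complex phase of any CGO solution $e^{\zeta\cdot x}(1+r)$ into a discrete set (the paper takes first component $\pm i\pi k$ with $k\in\Z+\frac12$, plus a $\theta$-dependent correction vector $l$), and the remainder must be produced by a Fourier-series solution operator on $[0,1]\times[-R,R]^n$ rather than the usual Faddeev operator. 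Moreover, the even reflection across $\{x_n=0\}$ used to localize the data on $\Gamma_1$ generates cross terms (the paper's $I_2$, $I_4$) whose frequencies $\kappa$ cannot be sent to a fixed target because of the discreteness in $k$; controlling $I_2$ requires the H\"older-continuity/Fourier-decay lemma of Heck--Wang together with the carefully engineered factor $\theta+2\pi([r]+\sigma_k)$ in $l$, which makes $|\kappa|$ large. Without this device the cross term is not small and the whole argument fails. Third, even granted the resulting fiberwise $H^{-1}$ bound, you still need the optimization in the parameters $\epsilon$, $r$, $\rho$ to extract a log modulus, interpolation against the a priori $H^s$ bound ($s>\frac{1+n}{2}$) to pass from $H^{-1}$ to $L^\infty$, and the partial Kelvin transform (with equivalence of DN-map norms under it) to reduce case (b) to case (a). None of these appear in your outline, so the proposal has to be counted as incomplete rather than as an alternative proof.
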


We will prove  Theorem \ref{main-theorem}by making use of the Floquet-Bloch-Gel’fand (FBG) transform (or fiber transform). This will allow us to prove Theorem \ref{main-theorem} by proving an equivalent result for a bounded domain. We describe this in section \ref{fiber}. 
In section \ref{cgo-solutions} we introduce complex geometric optics solutions for case (a), which we then, in section \ref{estimate}, use to establish our stability estimate. Finally, in section \ref{kelvin} we make use of a (partial) Kelvin transform to reduce case (b) to case (a). 

\section{Fiber decomposition}\label{fiber}

In this section we summarize certain results concerning the FBG transform. All statements are easy generalizations  of results proven in \cite{CKS_1}. 

Let $Y$ be  $\omega$, $\partial\omega$, or $\gamma_1$. We define the operator
\begin{align}\label{definition of U}
\mU(f)_{\theta}(x_0,\p{x}) =  \sum_{k \in \Z} e^{ik\theta}f(x_0+k,\p{x}), \quad f\in C^{\infty}(\R \times Y), \quad \theta \in [0,2\pi).
\end{align}
This extends to a unitary operator mapping $L^2(\R \times Y)$ onto the direct sum $\int_{(0,2\pi)}^{\oplus}L^2((0,1)\times Y) \frac{\D \theta}{2\pi}$. We will use the notation $\check Y=(0,1)\times Y$.

We need to introduce several function spaces. 
%For $s>\frac{1}{2}$, let
%\begin{equation}
%\mH^s_{\theta}(\check Y) =  \{ u \in H^s(\check Y):
%\partial^{j}_{x_0}u(1,\cdot) - e^{i\theta}\partial^{j}_{x_0}u(0,\cdot) = 0, j<s-1/2 \}
%\end{equation}
%and for $0\leq s\leq\frac{1}{2}$ let 
%$
%\mH^s_{\theta}(\check Y) =  H^s(\check Y). 
%$
Let 
\begin{equation}
H_{\triangle,\theta}(\check\omega) =  \{ u \in H_{\triangle}(\check\omega): \partial^{j}_{x_1}u(1,\cdot) - e^{i\theta}\partial^{j}_{x_1}u(0,\cdot) = 0,\ j= 0,1 \}. 
\end{equation}
Also let $\mH^{s,t}_\theta(\check Y)$ be the set of all functions
\begin{equation}
\phi(x)=\sum_{k\in\mathbb{Z}}e^{i(\theta+2\pi k)x_0}\phi_k(x'),\quad \phi_k\in H^t( Y),\quad \sum_{k\in\mathbb{Z}}(1+k^2)^s||\phi_k||^2_{H^{t}(Y)}\leq\infty.
\end{equation}

The maps $u\to u|_{\check\gamma}$ and $u\to \partial_\nu u|_{\check\gamma}$ defined on smooth functions may be extended to bounded operators
\begin{equation}\begin{aligned}
\mT_{0,\theta}: H_{\Delta,\theta}(\check\omega) \mapsto \mH^{-2,-\frac{1}{2}}(\check\gamma) \quad \mbox{and} \quad
\mT_{1,\theta}: H_{\Delta,\theta}(\check\omega) \mapsto \mH^{-2,-\frac{3}{2}}(\check\gamma).
\end{aligned}\end{equation}
Consider the set
\begin{equation}\begin{aligned}
\mathscr{H}_{\theta}(\check\gamma) =  \{ \mT_{0,\theta}u: u \in H_{\triangle,\theta}(\check\omega) \}.
\end{aligned}\end{equation}
It can be shown that $\mT_{0,\theta}$ is a bijection between $\mathcal{D}_{\theta} =  \{ u\in H_{\triangle,\theta}(\check\omega):\triangle u = 0) \}$ and $\mathscr{H}_{\theta}(\check\gamma)$. As with the original problem, we use this bijection to endow $\mathscr{H}_\theta(\check\gamma)$ with a topology.

Note that if $X_\theta(\check Y)$ is any of the spaces defined above, and $X(\R\times Y)$ is the similarly defined space on $\R\times Y$, then it holds that
\begin{equation}
\mU X(\R\times Y) = \int_{(0,2\pi)}^{\oplus}X_\theta(\check Y) \frac{\D \theta}{2\pi}.
\end{equation}
It also holds that, for $q\in\mathcal{V}(M_{\pm})$, 
\begin{equation}
\mU (-\triangle+q)|_{H_\Delta(\Omega)}\mU^{-1}= \int_{(0,2\pi)}^{\oplus}(-\triangle+q)|_{H_{\Delta,\theta}(\check\omega)} \frac{\D \theta}{2\pi},
\end{equation}
\begin{equation}
\mU \mT_{j}\mU^{-1}= \int_{(0,2\pi)}^{\oplus}\mT_{j,\theta} \frac{\D \theta}{2\pi},\quad j=0,1.
\end{equation}

For any $\theta \in \left[ 0 , 2\pi\right)$ consider the following  boundary value problem in $\check\omega$ 
\begin{equation}\label{BVP-FBG}
\left\{\begin{array}{l}
 (-\triangle+q)	u = 0 \quad \mbox{in } \check\omega,\\
					u = f \quad \mbox{on } \check\gamma,\\
u(1,\cdot) - e^{i\theta}u(0,\cdot) = 0 \quad \mbox{in } \omega,\\
\partial_\nu u(1,\cdot) - \partial_\nu e^{i\theta}u(0,\cdot) = 0 \quad \mbox{in } \omega.
\end{array}\right.
\end{equation}

The following proposition is analogous to  Proposition \ref{direct-problem}.

\begin{proposition}[see\cite{CKS_1}, Propositon 3.2]\label{technical proposition-2}
	Let $\theta \in [0,2\pi)$ and fix $M_+$, $M_-$ and $q \in \mathcal{V}(M_\pm)$. Then
	\begin{enumerate}
		\item[(a)]	For any $f \in \mathscr{H}_{\theta}(\check\gamma)$, there exists unique $u \in H_{\triangle,\theta}(\check\omega)$ solving \eqref{BVP-FBG} with
		\begin{equation}\begin{aligned}
				\lVert u \rVert_{L^2(\check\omega)} \leq C\lVert f\rVert_{\mathscr{H}_{\theta}(\check\gamma)}.
		\end{aligned}\end{equation}
		\item[(b)]	The DN map $\Lambda_{q,\theta}: f \mapsto \mT_{1,\theta}u\mid_{\check\gamma_1}$ is a bounded operator from $\mathscr{H}_{\theta}(\check\gamma)$ to $\mH^{-2,-\frac{3}{2}}(\check\gamma_1)$.
		\item[(c)]	For $q,\tilde{q} \in \mathcal{V}(M_\pm)$, the operator $\Lambda_{q,\theta} - \Lambda_{\tilde{q}, \theta}$ is bounded from $\mathscr{H}_{\theta}(\check\gamma)$ to $L^2(\check\gamma)$.	
	\end{enumerate}
\end{proposition}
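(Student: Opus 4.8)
The plan is to transplant the proof of Proposition~\ref{direct-problem} (equivalently \cite[Proposition~3.2]{CKS_1}) to the quasi-periodic fiber problem~\eqref{BVP-FBG}. Since the bounds defining $\mathcal{V}(M_\pm)$ do not involve the fiber parameter, every estimate obtained this way will be uniform in $\theta$, which is exactly what the direct-integral decomposition later requires. The whole of part~(a) rests on the coercivity encoded in the condition $M_-<C_\omega$: for $u$ in the closed subspace of $\mH^{1,1}_\theta(\check\omega)$ consisting of functions with vanishing lateral trace on $\check\gamma$ (the $\theta$-quasi-periodicity being built into $\mH^{1,1}_\theta$ and imposing no constraint on $\check\gamma$), discarding the $x_0$-derivative, using $q\geq -M_-$, and applying the Poincar\'e inequality $\int_{\check\omega}|\nabla_{x'}u|^2\,\D x\geq C_\omega\lVert u\rVert_{L^2(\check\omega)}^2$ gives
\begin{equation}
\int_{\check\omega}\left(|\nabla u|^2+q|u|^2\right)\D x \;\geq\; \int_{\check\omega}|\nabla_{x'}u|^2\,\D x-M_-\lVert u\rVert_{L^2(\check\omega)}^2 \;\geq\;(C_\omega-M_-)\lVert u\rVert_{L^2(\check\omega)}^2.
\end{equation}
Hence the sesquilinear form of $-\triangle+q$ with the quasi-periodic conditions is coercive on this subspace, and Lax--Milgram yields a unique weak solution of~\eqref{BVP-FBG} for $f=0$, the coercivity constant $(C_\omega-M_-)^{-1}$ furnishing the $L^2$ bound.

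For general $f\in\mathscr{H}_\theta(\check\gamma)$ I would use the bijection $\mT_{0,\theta}:\mathcal{D}_\theta\to\mathscr{H}_\theta(\check\gamma)$ to pick the harmonic representative $u_f\in\mathcal{D}_\theta$ with $\mT_{0,\theta}u_f=f$, and solve for $u=u_f+v$ with $v$ in the subspace above satisfying $(-\triangle+q)v=-qu_f$. Since $u_f$ is harmonic, $\lVert u_f\rVert_{L^2(\check\omega)}=\lVert u_f\rVert_{H_\triangle(\check\omega)}=\lVert f\rVert_{\mathscr{H}_\theta(\check\gamma)}$ by the very definition of the topology on $\mathscr{H}_\theta(\check\gamma)$, and $qu_f\in L^2(\check\omega)$; the coercive problem then produces $v$ with $\lVert v\rVert_{L^2}\lesssim\lVert u_f\rVert_{L^2}\lesssim\lVert f\rVert_{\mathscr{H}_\theta(\check\gamma)}$, which is (a). Part (b) follows at once: $\triangle u=qu\in L^2(\check\omega)$ shows the solution operator $f\mapsto u$ is bounded into $H_{\triangle,\theta}(\check\omega)$, so composing with the bounded Neumann trace $\mT_{1,\theta}:H_{\triangle,\theta}(\check\omega)\to\mH^{-2,-\frac{3}{2}}(\check\gamma)$ and restricting to $\check\gamma_1$ gives the asserted continuity of $\Lambda_{q,\theta}$.

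For the gain of regularity in (c), let $u,\tilde u\in H_{\triangle,\theta}(\check\omega)$ solve~\eqref{BVP-FBG} with potentials $q,\tilde q\in\mathcal{V}(M_\pm)$ and the \emph{same} data $f$; then they share the harmonic part $u_f$, so $w:=u-\tilde u$ lies in the vanishing-trace subspace of $\mH^{1,1}_\theta(\check\omega)$ and, by a direct computation, solves $(-\triangle+q)w=(\tilde q-q)\tilde u$ with $w|_{\check\gamma}=0$ and the quasi-periodic conditions. The right-hand side is in $L^2(\check\omega)$ with $\lVert(\tilde q-q)\tilde u\rVert_{L^2}\lesssim\lVert\tilde u\rVert_{L^2}\lesssim\lVert f\rVert_{\mathscr{H}_\theta(\check\gamma)}$, so standard $H^2$ boundary regularity for the Dirichlet problem on the cylinder $\check\omega$ with its $C^2$ lateral boundary (the $\theta$-quasi-periodicity at the ends causing no loss) places $w\in H^2(\check\omega)$ with $\lVert w\rVert_{H^2}\lesssim\lVert f\rVert_{\mathscr{H}_\theta(\check\gamma)}$. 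Its Neumann trace $\partial_\nu w|_{\check\gamma}$ then lies in $H^{1/2}(\check\gamma)\hookrightarrow L^2(\check\gamma)$ with the same bound, and since $\Lambda_{q,\theta}f-\Lambda_{\tilde q,\theta}f=\partial_\nu w|_{\check\gamma_1}$, this gives (c).

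The only genuinely delicate points I expect are the mapping properties of the trace operators $\mT_{0,\theta},\mT_{1,\theta}$ into the negative-order spaces $\mH^{-2,\cdot}(\check\gamma)$ and the fact that $H_{\triangle,\theta}(\check\omega)$ is strictly larger than $H^2(\check\omega)$, so that the identification of $\mathscr{H}_\theta(\check\gamma)$ and the norm equivalences used above are not literally classical trace theory. These, however, are precisely the statements established in \cite{CKS_1} and carried over here, the $x_0$-quasi-periodicity requiring only minor bookkeeping in the regularity and trace arguments and never affecting the constants, which depend solely on $\omega$ and $M_\pm$.
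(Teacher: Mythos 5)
Your proposal is correct and follows what is essentially the paper's own route: the paper gives no proof here at all, simply citing \cite[Proposition 3.2]{CKS_1} (with the remark that the passage from $\omega\subset\R^2$ to $\omega\subset\R^n$ is harmless), and your reconstruction --- harmonic lift through the bijection $\mT_{0,\theta}:\mathcal{D}_\theta\to\mathscr{H}_\theta(\check\gamma)$, coercivity of the quadratic form on the zero-lateral-trace quasi-periodic subspace plus Lax--Milgram for (a), boundedness of $\mT_{1,\theta}$ on $H_{\triangle,\theta}(\check\omega)$ for (b), and $H^2$ elliptic regularity for the zero-trace difference $w=u-\tilde u$ to obtain the $L^2(\check\gamma)$ smoothing in (c) --- is precisely the standard argument behind that citation. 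One cosmetic point: with the paper's literal definition of $C_\omega$ (namely $\lVert\nabla u\rVert_{L^2}\geq C_\omega\lVert u\rVert_{L^2}$), the Poincar\'e step yields $\int_{\check\omega}\lvert\nabla_{x'}u\rvert^2\,\D x\geq C_\omega^2\lVert u\rVert^2_{L^2(\check\omega)}$, so your coercivity constant should read $C_\omega^2-M_-$; this squaring discrepancy is inherited from the paper's own convention $M_-<C_\omega$ and does not affect the structure of the argument.
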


We have that
\begin{equation}\begin{aligned}
	\mU \Lambda_{q,\Gamma_1} \mU^{-1} = \int_{(0,2\pi)}^{\oplus} \Lambda_{q,\check\gamma_1,\theta} \frac{\D \theta}{2 \pi},
\end{aligned}\end{equation} 
and
\begin{equation}\label{FBG staility relation}
\lVert \Lambda_{q_1,\Gamma_1} - \Lambda_{q_2,\Gamma_1}\lVert_{*} = \sup_{\theta \in [0,2\pi)} \lVert \Lambda_{q_1,\check\gamma_1,\theta} - \Lambda_{q_2,\check\gamma_1,\theta} \rVert_{*},
\end{equation}
where on the right hand side $||\cdot||_\ast$ denotes the operator norm $||\cdot||_{\mathscr{H}_{\theta}\to L^2}$

To prove Theorem \ref{main-theorem} it is then enough to prove
\begin{theorem}\label{main-theorem-FBG}
	Under the same conditions as in Theorem \ref{main-theorem}, we have
	\begin{equation}\begin{aligned}
		\lVert q_1 - q_2 \rVert_{L^\infty} \leq C_{\theta} \left\lvert \log\lVert \Lambda_{q_1,\Gamma_1,\theta} - \Lambda_{q_2,\Gamma_1,\theta}\rVert_{*}\right\rvert^{-\sigma}, 
	\end{aligned}\end{equation}
	holds for $\theta \in [0,2\pi)$.
\end{theorem}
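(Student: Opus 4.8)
The plan is to adapt the complex geometric optics (CGO) / Alessandrini identity scheme to the fibered, reflected setting. I would work on the bounded fiber domain $\check\omega$ with the $\theta$-quasiperiodic conditions in $x_0$, focusing on geometry (a); case (b) is reduced to (a) by the partial Kelvin transform of section \ref{kelvin}, under which a spherical $\gamma_0$ becomes flat. Write $q = q_1 - q_2$ and $\epsilon = \lVert \Lambda_{q_1,\Gamma_1,\theta} - \Lambda_{q_2,\Gamma_1,\theta}\rVert_*$.

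First I would construct CGO solutions $u_1, v \in H_{\Delta,\theta}(\check\omega)$ of $(-\Delta+q_1)u_1 = 0$ and $(-\Delta + q_2)v = 0$ of Faddeev type, $u_1 = e^{\zeta_1\cdot x}(1 + \psi_1)$ and $v = e^{\eta\cdot x}(1 + \psi_2)$, with $\zeta_1\cdot\zeta_1 = \eta\cdot\eta = 0$ and $\lvert\mathrm{Re}\,\zeta_1\rvert = \lvert\mathrm{Re}\,\eta\rvert = \tau$ large. The Floquet condition forces the $x_0$-component of each frequency to be purely imaginary and quantized in $i(\theta + 2\pi\Z)$, so the large parameter $\tau$ lives in the $x'$-directions; the remainder bound $\lVert\psi_j\rVert_{L^2} \lesssim \tau^{-1}\lVert q_j\rVert_{L^\infty}$ follows from the standard resolvent estimate for $\Delta + 2\zeta\cdot\nabla$, realized here fiberwise. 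The frequencies are chosen so that $\zeta_1 + \bar\eta = i\xi$ for the target frequency $\xi$, with $\xi_0 \in 2\pi\Z$ (matching the period of $q$) realizable through the quantized $x_0$-components, and $\lvert\xi'\rvert \lesssim \tau$.

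To accommodate the partial data I would exploit the flatness of $\gamma_0\subset\{x_n=0\}$ by reflection: extend $q_1, q_2$ evenly across $\{x_n = 0\}$ to the doubled domain, build the CGOs there, and replace $u_1, v$ by their odd reflections $\tilde u_1(x) - \tilde u_1(Rx)$ and $\tilde v(x) - \tilde v(Rx)$, where $R$ flips $x_n$. These vanish on $\{x_n = 0\}\supset\check\gamma_0$, hence have Dirichlet traces supported in $\check\gamma_1$. The Alessandrini identity (Green's formula, with the end-cap terms at $x_0 = 0,1$ cancelling by the matched $\theta$-quasiperiodicity once $v$ enters conjugated) then reads $\int_{\check\omega} q\, u_1\bar v = \langle(\Lambda_{q_1,\theta} - \Lambda_{q_2,\theta})f, g\rangle_{\check\gamma_1}$, whose right-hand side is bounded by $\epsilon\lVert f\rVert_{\mathscr{H}_\theta}\lVert g\rVert_{L^2} \lesssim \epsilon\, e^{C\tau}$. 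Expanding the left-hand side into four terms and using the evenness of the extended $q$ (so that $q(Rx) = q(x)$ and $\widehat{\tilde q}(\xi) = \widehat{\tilde q}(R\xi)$), the two diagonal terms assemble into $\widehat{\tilde q}(-\xi)$ over the full doubled domain, while the two cross terms must be shown to be controllable; together with the CGO remainders this yields a pointwise Fourier estimate $\lvert\widehat{\tilde q}(\xi)\rvert \lesssim e^{C\tau}\epsilon + \tau^{-1}N$ for $\lvert\xi\rvert \lesssim \tau$.

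The final step is the standard frequency split and optimization. Combining the low-frequency bound above with the a priori tail estimate $\lvert\widehat{\tilde q}(\xi)\rvert \lesssim \lvert\xi\rvert^{-s}N$ controls $\lVert\tilde q\rVert_{H^{-1}}$ in terms of $e^{C\tau}\epsilon + \tau^{-1}N$; choosing $\tau \sim \lvert\log\epsilon\rvert$ renders the data contribution $\epsilon^{1/2}$-small and leaves the a priori contribution of size $\lvert\log\epsilon\rvert^{-1}$, so $\lVert\tilde q\rVert_{H^{-1}} \lesssim \lvert\log\epsilon\rvert^{-c}$. Interpolating with the $H^s$ bound ($s > \frac{1+n}{2}$) and using $H^t\hookrightarrow L^\infty$ for $t > \frac{1+n}{2}$ promotes this to $\lVert q\rVert_{L^\infty} \lesssim \lvert\log\epsilon\rvert^{-\sigma}$, the claim. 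I expect the main obstacle to be the CGO construction in this mixed geometry, simultaneously enforcing the $\theta$-quasiperiodic conditions in $x_0$ and the reflection symmetry across $\{x_n=0\}$, together with showing that the reflection cross terms do not destroy the clean recovery of $\widehat{\tilde q}$; the careful tracking of the exponential constant $C$ in $e^{C\tau}$ is what ultimately fixes $\sigma$ and secures the log, rather than log-log, rate.
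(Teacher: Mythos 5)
Your overall strategy coincides with the paper's: fiber to $\check\omega$, reduce case (b) to case (a) via the partial Kelvin transform, build $\theta$-quasiperiodic CGO solutions with the large parameter in the $x'$ variables, antisymmetrize across $\{x_n=0\}$ so the Dirichlet traces are supported in $\check\gamma_1$, run the Alessandrini identity, and finish by a frequency split plus interpolation between $H^{-1}$ and $H^s$. The genuine gap is exactly at the step you defer: the reflection cross terms. Writing $\delta = \lVert \Lambda_{q_1,\check\gamma_1,\theta}-\Lambda_{q_2,\check\gamma_1,\theta}\rVert_*$, after expanding $\int_{\check\omega} q\, v_1\overline{v_2}$ the pure-exponential cross term is not an error of size $O(\tau^{-1})$: it equals $2\hat q_k(\kappa)$, the Fourier transform of $q$ at a shifted frequency $\kappa$ determined by the phases, and a priori this is only $O(\lVert q\rVert_{L^1(\check\omega)})$, i.e.\ $O(1)$. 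Consequently your claimed pointwise estimate $\lvert\widehat{\tilde q}(\xi)\rvert \lesssim e^{C\tau}\delta + \tau^{-1}N$ is not justified, and with only $O(1)$ control of the cross terms the optimization $\tau\sim\lvert\log\delta\rvert$ does not close the argument.

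The paper's resolution (adapted from Heck--Wang) needs two ingredients your proposal lacks. First, the CGO phases carry an extra free parameter: $\zeta_j$ contains the vector $l=\left(\theta+2\pi([r]+\sigma_k)\right)\left(1,-\frac{2\pi k}{|\eta|^2}\eta\right)$, which simultaneously enforces the Floquet condition (through the correction $\sigma_k$) and, through the free size $r$, pushes the cross-term frequency up to $|\kappa|\geq \frac{4\pi^2 k r}{|\eta|^2}|\eta-\eta^*|$, while $l$ cancels in $\zeta_1+\overline{\zeta_2}$ so the diagonal terms still reproduce $\hat q_k(\eta)$ exactly. Second, one needs a quantitative high-frequency decay statement: since $q\in H^s$ with $s>\frac{1+n}{2}$ embeds in $C^{0,\alpha}$, the paper proves (via Lemmas 2.1 and 2.2 of Heck--Wang) that $|\hat q_k(\rho)|\leq C\left[\exp\left(-\frac{\epsilon^2}{4\pi}(k^2+|\rho|^2)\right)+\epsilon^\alpha\right]$ for all small $\epsilon$, so the cross term is small precisely because its frequency is large. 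The resulting extra terms $\epsilon^\alpha+\exp(\cdots)+r^{-1}$ --- not $\tau^{-1}$ --- are what force the multi-parameter optimization of the paper ($\epsilon^{2\alpha}=r^{-1}$, then $r=\rho^{(4+n)/\tilde\alpha}$, then $\rho$ balanced against the data) and ultimately fix the exponent $\sigma$. Without both ingredients the recovery of $\hat q_k(\eta)$ from partial data, and hence the log-type (rather than log-log) rate, cannot be obtained by your scheme.
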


\section{Complex geometric optics solutions}\label{cgo-solutions}

In this section we will construct complex geometric optics solutions for the problem  \eqref{BVP-FBG}. These will later be used to prove Theorem \ref{main-theorem-FBG} and hence Theorem \ref{main-theorem}.

In this section we consider the case $(a)$, i.e $\omega \subset \{ x_n < 0 \}$ is such that $\gamma_0  =  \partial\omega \cap \{  x_n = 0 \}\neq\emptyset$. If $x=(x_0, x_1, \ldots, x_n)\in \R^{1+n}$, we will  use the notation $x^\ast=(x_0, x_1, \ldots, -x_n)$. For a function $f$ defined on $\R^{1+n}$ or a proper subset of it, we will write $f^\ast(x) = f(x^\ast)$. 

We extend $q_j$ so that
as $q_j=0$ for $ x \in \{\R^{1+n} : x_n < 0 \} \setminus \check\omega$ and $q_j(x) = q_j(x^{*})$ for   $x\in \{\R^{1+n} : x_n > 0 \}$. This means that $q^\ast_j=q_j$.

For each $j=1,2$ we are interested in solutions $u_j \in H_{\triangle}(\check\omega)$  of \eqref{BVP-FBG}  for $q=q_j$  of the form
\begin{equation}\label{CGO}
u_j(x_0,\p{x}) = e^{\zeta_j\cdot x}(1+r_j),   
\end{equation}
where $\zeta_j \in \mathbb{C}^{1+n}$ is chosen so that $\zeta_j\cdot\zeta_j = 0$.

We will  construct $\zeta_j$ explicitly.
Let $\xi,\eta \in \R^n\setminus\{0\}$ such that $\lvert \xi \rvert = 1$, $\xi\cdot \eta = 0$ and $\xi\cdot(0,\ldots,0,1) = 0$. For any $0 \leq \theta < 2\pi$, $k \in \mathbb{Z}+\frac{1}{2}$, $r>0$ and $\xi$, $\eta$ as above we define
\begin{equation}\label{definition-L}
l=\left(\theta + 2\pi\left([r]+\sigma_k\right)\right)\left(1, -\frac{2\pi k}{|\eta|^2}\eta \right),
\end{equation}
where $[r]$ denotes the integer part of $[r]$, and $\sigma_k=7/4$ if $k-1/2$ is even and $\sigma_k=5/4$ if $k-1/2$ is odd. 
Let
\begin{equation}\label{definition-tau}
\tau  =  \sqrt{\frac{\lvert \eta \rvert^2}{4} + \pi^2k^2 + \lvert l \rvert^2}.
\end{equation}
%Then
%\begin{equation}\label{estimate-tau}
%2\pi r < \tau \leq \frac{\lvert (2\pi k,\eta)\rvert}{2} + 4\pi(r+1)\left( \frac{\lvert 2\pi k\rvert}{\lvert \eta \rvert} + 1 \right).
%\end{equation}
We define
\begin{equation}\begin{aligned}
\zeta_1 &= \left( i\pi k, -\tau\xi + i\frac{\eta}{2} \right) + il,\\
\zeta_2 &= \left(-i\pi k, \tau\xi -i\frac{\eta}{2} \right) + il,
\end{aligned}\end{equation}
and observe that 
\begin{equation}\label{zeta-properties}
\begin{gathered}
		\zeta_1 + \overline{\zeta_2} = i(2\pi k,\eta),	\quad\zeta^{*}_1 + \overline{\zeta^{*}_2} = i(2\pi k,\eta^{*}),\\
		\zeta^{*}_1 + \overline{\zeta_2} = i(2\pi k, \frac{1}{2}(\eta+\eta^\ast)+\left(\theta + 2\pi\left([r]+\sigma_k\right)\right)
		\frac{2\pi k}{|\eta|^2}(\eta-\eta^\ast)),\\
		\zeta_1 + \overline{\zeta^{*}_2} = i(2\pi k, \frac{1}{2}(\eta+\eta^\ast)-\left(\theta + 2\pi\left([r]+\sigma_k\right)\right)
		\frac{2\pi k}{|\eta|^2}(\eta-\eta^\ast)),\\
		\zeta_j\cdot\zeta_j = 0 = \zeta^{*}_j\cdot\zeta^{*}_j.
\end{gathered}
\end{equation}

Suppose $R>0$ is such that $\omega\subset \overline{B(0,R)}\subset\R^n$. We follow \cite{Ha, CKS_2} to prove
\begin{lemma}
Solutions of the form \eqref{CGO} exist in $[0,1]\times[-R,R]^n$ such that
\begin{equation}
||r_j||_{L^2([0,1]\times[-R,R]^n)}\leq \frac{C}{\tau}||q_j||_{L^\infty},
\end{equation}
\begin{equation}
||\nabla r_j||_{L^2([0,1]\times[-R,R]^n)}\leq C||q_j||_{L^\infty},
\end{equation}
for $\tau>2\pi R||q_j||_{L^\infty}$.
\end{lemma}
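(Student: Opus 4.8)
The goal is to construct solutions $u_j = e^{\zeta_j\cdot x}(1+r_j)$ to $(-\triangle+q_j)u_j=0$, which reduces to solving the equation for the correction term $r_j$. Substituting the ansatz into the PDE and using $\zeta_j\cdot\zeta_j=0$, the exponential factors cancel in the principal part and one obtains an equation of the form
\[
(-\triangle - 2\zeta_j\cdot\nabla)r_j = -q_j(1+r_j)\quad\text{in }[0,1]\times[-R,R]^n.
\]
This is a transport-type perturbation of the Laplacian, and the standard strategy is to invert the operator $-\triangle - 2\zeta_j\cdot\nabla$ on a suitable function space, then set up a fixed-point (Neumann series) argument. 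Following \cite{Ha, CKS_2}, I would work on the box $[0,1]\times[-R,R]^n$ and use a Fourier series representation adapted to the periodicity in $x_0$ with the quasi-periodic phase $\theta$ and to a convenient (e.g. periodic or Dirichlet) extension in the remaining variables.

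The heart of the matter is the solvability estimate for the operator $L_{\zeta_j} = -\triangle - 2\zeta_j\cdot\nabla$. Writing $r_j=\sum_m c_m e^{i m\cdot x}$ over the appropriate lattice of frequencies $m$, the symbol of $L_{\zeta_j}$ is $|m|^2 - 2i\,\zeta_j\cdot m$. The key point is to bound the reciprocal of this symbol from below in modulus, uniformly over all admissible frequencies $m$. Because $\zeta_j$ has a large real part of size comparable to $\tau$ in the $\xi$-direction, the imaginary part $-2\,\im(\zeta_j\cdot m) = 2\tau\,(\xi\cdot m_{\mathrm{real}})$ (schematically) provides a gain, so that $||L_{\zeta_j}^{-1}||_{L^2\to L^2}\lesssim \tau^{-1}$ while the gradient of the inverse is controlled by an $O(1)$ constant. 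This is precisely what produces the two claimed estimates: the $\tau^{-1}$ decay on $||r_j||_{L^2}$ and the uniform bound on $||\nabla r_j||_{L^2}$. The careful arithmetic choices in \eqref{definition-L}--\eqref{definition-tau}, in particular the half-integer $k$ and the offsets $\sigma_k$, are designed so that the symbol never vanishes on the relevant lattice; I would verify this non-vanishing (a lower bound on the distance of the frequencies to the characteristic set) as the crucial lemma.

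With the solution operator $G_{\zeta_j} := L_{\zeta_j}^{-1}$ satisfying $||G_{\zeta_j}||_{L^2\to L^2}\le C/\tau$, the fixed-point equation becomes
\[
r_j = -G_{\zeta_j}\big(q_j(1+r_j)\big) = -G_{\zeta_j}q_j - G_{\zeta_j}(q_j r_j).
\]
Since $||G_{\zeta_j}(q_j\,\cdot)||_{L^2\to L^2}\le C||q_j||_{L^\infty}/\tau < 1$ exactly when $\tau > C||q_j||_{L^\infty}$ (matching the stated threshold $\tau>2\pi R||q_j||_{L^\infty}$), the Neumann series converges in $L^2$, giving existence and the bound $||r_j||_{L^2}\le C\tau^{-1}||q_j||_{L^\infty}$. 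The gradient estimate then follows by applying the $O(1)$ mapping bound for $\nabla G_{\zeta_j}$ to the same equation. I expect the main obstacle to be establishing the uniform lower bound on $|L_{\zeta_j}|$ on the frequency lattice — that is, checking that the specific construction of $l$, $\tau$, and the parities encoded by $\sigma_k$ keeps the symbol bounded away from zero with the right quantitative gain — since everything downstream is a routine contraction-mapping argument once that resolvent estimate is in hand.
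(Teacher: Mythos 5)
Your overall strategy is the same as the paper's: reduce to the equation $(-\triangle - 2\zeta_j\cdot\nabla)r_j = -q_j(1+r_j)$ with periodic conditions in $x_0$, construct the inverse $G_{\zeta_j}$ of $-\triangle - 2\zeta_j\cdot\nabla$ by Fourier series on the box $[0,1]\times[-R,R]^n$, prove $\lVert G_{\zeta_j}\rVert_{L^2\to L^2}\leq \pi R/\tau$ together with an $O(1)$ bound for $\nabla G_{\zeta_j}$, and close with a Neumann series once $\tau > 2\pi R\lVert q_j\rVert_{L^\infty}$. However, there is a genuine gap in the step that you yourself single out as crucial: the uniform lower bound on the symbol over the frequency lattice. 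You assert that this non-vanishing is designed into the definition of $\zeta_j$ through the half-integer $k$ and the offsets $\sigma_k$ of \eqref{definition-L}--\eqref{definition-tau}. It is not. Those parameters enter only the imaginary part of $\zeta_j$ (recall that $\zeta_{j,0}$ is purely imaginary and the real part of $\zeta_j$ is $(0,\mp\tau\xi)$), hence only the \emph{real} part of the symbol $\lvert m\rvert^2 - 2i\zeta_j\cdot m$, and no choice of them can keep the symbol away from zero. Indeed, the imaginary part of the symbol is proportional to $\tau\,(\xi\cdot m)$ --- this is exactly the ``gain'' you describe --- and it vanishes identically on the whole sublattice $\{\xi\cdot m = 0\}$; there the symbol is real and does vanish, for instance at $m=0$, since constants are annihilated by $-\triangle - 2\zeta_j\cdot\nabla$ with periodic conditions. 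So the ``periodic'' option you leave open for the $x'$ variables is not viable, and the ``Dirichlet'' option fails for a different reason: a sine expansion does not diagonalize the first-order term $\zeta_j\cdot\nabla$.

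The paper's actual device, which your plan is missing, is a shift of the lattice used to expand $r_j$, not any property of $\zeta_j$: one takes the orthonormal exponentials $e_\alpha$ indexed by $\alpha\in\Z^{1+n} - (0,1/2,0,\ldots,0)$, i.e.\ integer frequencies in $x_0$ and in the directions orthogonal to $\xi$, but \emph{half-integer} frequencies in the $\xi$-direction. This is legitimate because the equation for $r_j$ prescribes only $1$-periodicity in $x_0$; no boundary condition in $x'$ is imposed on $r_j$, so one is free to extend antiperiodically in the $\xi$-direction across the box $[-R,R]^n$. On this shifted lattice $\lvert\alpha_1\rvert\geq 1/2$, so the imaginary part of the denominator, which is a fixed multiple of $\tau\alpha_1/R$, is bounded below in absolute value by a multiple of $\tau/R$; this yields $\lvert \hat r_\alpha\rvert \leq \frac{\pi R}{\tau}\lvert\hat\phi_\alpha\rvert$ and hence the two resolvent bounds, after which your contraction argument goes through verbatim. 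As for the half-integer $k$ and the offsets $\sigma_k$, their role is entirely different: they guarantee $e^{i\pi k}e^{2\pi i\sigma_k}=1$ for either parity of $k-\frac{1}{2}$, so that $e^{\zeta_1\cdot x}$ picks up exactly the factor $e^{i\theta}$ under $x_0\mapsto x_0+1$ and the CGO solution satisfies the quasi-periodic conditions in \eqref{BVP-FBG} once $r_1$ is periodic; moreover, the precise form of $l$ and $\sigma_k$ is what is exploited later, in the frequencies $(2\pi k,\eta)$ and $\kappa$ appearing in the stability estimate. They contribute nothing to the invertibility of $-\triangle - 2\zeta_j\cdot\nabla$.
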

\begin{proof}
  Note that  $u=e^{\zeta\cdot x}(1+r)$, where $\zeta=\zeta_1$ or $\zeta=\zeta_2$ satisfies \eqref{BVP-FBG} if 
\begin{equation}\label{eq-r}
-\triangle r -2\zeta\cdot\nabla r+qr=-q,\; r(1,\cdot)=r(0,\cdot),\; \partial_\nu r(1,\cdot)=\partial_\nu r(0,\cdot).
\end{equation}

We will construct a solution operator $G_\zeta$ for the operator $-\triangle-2\zeta\cdot\nabla$. Without loss of generality we may choose a basis in $\R^n$ so that $\xi=(1,0\ldots,0)\in\R^n$. For any $\alpha=(\alpha_0,\alpha')\in\left(\mathbb{Z}^{1+n}-(0,1/2,0,\ldots,0)\right)$, let
\begin{equation}
e_\alpha=(2R)^{-n/2}\exp\left\{2\pi i\alpha_0x_0+\frac{i\pi}{R}\alpha'\cdot x'    \right\}.
\end{equation}
These form an orthonormal basis in $L^2([0,1]\times[-R,R]^n)$. Suppose we want to find a solution $r$ to the equation
$$
-\triangle r-2\zeta\cdot\nabla r=\phi.$$
If we define 
$
\hat r_\alpha=\langle r,e_\alpha\rangle
$
and $\hat\phi_\alpha=\langle \phi,e_\alpha\rangle $ then
\begin{equation}
\hat r_\alpha=\frac{\hat \phi_\alpha}{\pi^2\left(4\alpha_0^2+R^{-2}{\alpha'\cdot\alpha'}-
4i\pi^{-1}\zeta_0\alpha_0-2i(\pi R)^{-1}\zeta'\cdot\alpha'\right)}.
\end{equation}
In our case we have $\re(\zeta_0)=0$, $\re(\zeta')=-\tau(1,0,\ldots,0)\in\R^n$, so we get
\begin{equation}
|\hat r_\alpha|\leq\frac{|\hat\phi_\alpha|}{\frac{2\tau}{\pi R}|\alpha_1|}\leq\frac{\pi R}{\tau}|\hat\phi_\alpha|
\end{equation}
for all $\alpha$. Writing $r=G_\zeta \phi$ and using Plancherel's equality we get
\begin{equation}
||G_\zeta \phi||_{L^2([0,1]\times[-R,R]^n)}\leq\frac{\pi R}{\tau}||\phi||_{L^2([0,1]\times[-R,R]^n)}.
\end{equation}
We may also obtain (see \cite{Ha})  estimates of the form
\begin{equation}
||\nabla G_\zeta\phi||_{L^2([0,1]\times[-R,R]^n)}\leq C_1||\phi||_{L^2([0,1]\times[-R,R]^n)},
\end{equation}
\begin{equation}
||\triangle G_\zeta\phi||_{L^2([0,1]\times[-R,R]^n)}\leq C_2(\zeta)||\phi||_{L^2([0,1]\times[-R,R]^n)},
\end{equation}
where $C_1$ doesn't depend on $\zeta$.

We may write \eqref{eq-r} in the form
\begin{equation}
r+G_\zeta (q r)=- G_\zeta (q).
\end{equation}
For $\tau>2\pi R||q||_{L^\infty}$ we may invert the operator $I+G_\zeta(q\cdot)$ that appears on the left hand side to obtain a solution $r\in H^2([0,1]\times[-R,R]^n)$ that satisfies
\begin{equation}
||r||_{L^2([0,1]\times[-R,R]^n)}\leq \frac{CR}{\tau}||q||_{L^\infty},
\end{equation}
with a constant $C$ that depends only on $\omega$. We also get
\begin{equation}
||\nabla r||_{L^2([0,1]\times[-R,R]^n)}\leq C||q||_{L^\infty},
\end{equation}
\end{proof}

Let
 \begin{equation}
v_j=u_j-u_j^\ast= e^{\zeta_j\cdot x}(1+r_j)-e^{\zeta_j^\ast\cdot x}(1+r_j^\ast).
\end{equation}
Clearly, $\text{supp }v_j|_{\check\gamma}\subset\check\gamma_1$.

%Also observe that using the Carleman estimate in \cite[Proposition: 5.1]{CKS_1} for $u \in \mC^2_{\theta}(\overline{\check\omega})$, with $u \mid_{\check\gamma} = 0 = \partial_{\nu}u \mid_{\check\gamma}$, we get
%\begin{equation}\label{C-Estimate}
%\tau^2 \lVert e^{-\tau\xi\cdot \p{x}} u \rVert^2_{L^2(\check\omega)} 
%\leq \lVert e^{-\tau\xi\cdot \p{x}} \triangle u \rVert^2_{L^2(\check\omega)}.
%\end{equation}

Using integration by parts  we have, for $q = (q_1-q_2)$,
\begin{multline}%\label{Integral-Id}
\langle (\Lambda_{q_1,\check{\gamma}_1,\theta} - \Lambda_{q_2,\check{\gamma}_1,\theta}) v_1, v_2 \rangle\\
= \int_{\check\omega} qv_1 \overline{v_2} \D x 
= \int_{\check\omega} q
\left( e^{\left(\zeta_1 + \overline{\zeta_2}\right)\cdot x} 
+ e^{\left(\zeta^{*}_1 + \overline{\zeta^{*}_2}\right) \cdot x} 
\right) \D x\\
-\int_{\check\omega} q
\left(
e^{\left(\zeta^{*}_1 + \overline{\zeta_2}\right)\cdot x} 
+ e^{\left(\zeta_1 + \overline{\zeta^{*}_2}\right)\cdot x} \right) \D x \\
+ \int_{\check\omega} q\left( e^{\left(\zeta_1 + \overline{\zeta_2}\right)\cdot x}(r_1 + \overline{r_2} + r_1\overline{r_2})
+ e^{\left(\zeta^{*}_1 + \overline{\zeta^{*}_2}\right)\cdot x}(r^{*}_1 + \overline{r^{*}_2} + r^{*}_1\overline{r^{*}_2}) \right) \D x \\
- \int_{\check\omega} q\left( e^{\left(\zeta^{*}_1 + \overline{\zeta_2}\right)\cdot x}(r^{*}_1 + \overline{r_2} + r^{*}_1\overline{r_2}) 
+ e^{\left(\zeta_1 + \overline{\zeta^{*}_2}\right)\cdot x}(r_1 + \overline{r^{*}_2} + r_1\overline{r^{*}_2}) \right) \D x\\
=: I_1-I_2+I_3-I_4.
\end{multline}

 We can easily estimate
\begin{multline}
\left\lvert I_3 \right\rvert 
\leq \left\lvert \int_{\check\omega} q e^{i(2\pi k,\eta)\cdot x}\left( r_1 + \overline{r_2} + r_1\overline{r_2} \right) \D x \right\rvert \\
+ \left\lvert \int_{\check\omega} q e^{i(2\pi k,\eta^{*})\cdot x}\left( r^{*}_1 + \overline{r^{*}_2} + r^{*}_1\overline{r^{*}_2} \right) \D x \right\rvert
\leq \frac{C}{\tau}\lVert q \rVert_{L^{\infty}(\check\omega)}.
\end{multline}
Since $\re(\zeta_1^\ast+\overline{\zeta_2})=0$, the we get in the same way that
\begin{equation}
\left\lvert I_4 \right\rvert 
\leq \frac{C}{\tau}\lVert q \rVert_{L^{\infty}(\check\omega)}.
\end{equation}

Using the fact that $q^\ast=q$, we see that 
\begin{equation}\label{Principal term of Int-Id}
I_1 = 2\int_{\check\omega} e^{i(2\pi k,\eta)\cdot x} q(x) \D x
= 2\int_{\R^n} e^{i\eta\cdot \p{x}} \left(\int_{0}^{1} e^{2\pi i k x_1} q(x_1,\p{x}) \D x_1 \right) \D \p{x} 
= 2\widehat{q_k}(\eta), 
\end{equation}
where
\begin{equation}
q_k(\p{x}) = \int_{0}^{1} e^{2\pi i k x_1}q_(x_1,\p{x}) \D x_1.
\end{equation}

Similarly
\begin{equation}
I_2=2\hat q_k(\kappa),
\end{equation}
where $\kappa\in\R^n$ is 
\begin{equation}
\kappa=\frac{1}{2}(\eta+\eta^\ast)+(\theta+2\pi([r]+\sigma_k))\frac{2\pi k}{|\eta|^2}(\eta-\eta^\ast),
\end{equation}
where $\sigma_k$ is either $5/4$ or $7/4$. 
%Since $(\eta-\eta^\ast)\cdot(\eta+\eta^\ast)=0$ we can derive the estimate
%\begin{equation}
%|\kappa|\geq \min(|\eta|, 4\pi(\theta+2\pi([r]+\sigma))|k| ).
%\end{equation} 

\begin{lemma}
There exists  constants $C,\epsilon_0,\alpha>0$ such that for any $0<\epsilon<\epsilon_0$
\begin{equation}
|\hat q_k(\rho)|\leq C [\exp[-\frac{\epsilon^2}{4\pi}(k^2+|\rho|^2)]+\epsilon^\alpha]
\end{equation}
\end{lemma}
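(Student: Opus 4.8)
My plan is to read the claimed inequality as a purely a priori bound on the mixed Fourier transform $\hat q_k(\rho)$ of the extended difference $q=q_1-q_2$ (a Fourier series in the periodic variable, a Fourier transform in $x'$), and to obtain it by Gaussian mollification, the mollification width being exactly the parameter $\epsilon$; the explicit factor $\exp[-\frac{\epsilon^2}{4\pi}(k^2+|\rho|^2)]$ is precisely the Fourier transform of such a Gaussian. First I would record the crude uniform bound: $q$ is real, $\lVert q\rVert_{L^\infty}\le 2M_+$, it is $1$-periodic in the first variable, and in the transverse variable it is supported in the bounded set $\overline{\omega\cup\omega^\ast}$; hence it is integrable over one period cell and $|\hat q_k(\rho)|\le \lVert q\rVert_{L^1}\le C$ for all $(k,\rho)$. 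This is the bound that will be paired with the Gaussian factor.

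Next I would introduce the approximate identity $\chi_\epsilon(x)=\epsilon^{-(1+n)}e^{-\pi|x|^2/\epsilon^2}$, which satisfies $\int\chi_\epsilon=1$ and, by the standard Gaussian computation, $\widehat{\chi_\epsilon}(\xi)=e^{-\frac{\epsilon^2}{4\pi}|\xi|^2}$. I convolve $q$ with $\chi_\epsilon$ in the ordinary way in $x'$ and, in the periodic variable, with the periodization of the one dimensional Gaussian, whose $k$-th Fourier coefficient is again $e^{-\frac{\epsilon^2}{4\pi}(2\pi k)^2}$ by Poisson summation. Writing $\psi_\epsilon$ for this mixed convolution of $q$ and applying the convolution theorem for the mixed transform yields the exact identity $\hat q_k(\rho)=\widehat{\chi_\epsilon}(2\pi k,\rho)\,\hat q_k(\rho)+\widehat{(q-\psi_\epsilon)}_k(\rho)$. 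This is not circular, since the first summand is estimated by the crude bound above: it is at most $\lVert q\rVert_{L^1}\,e^{-\frac{\epsilon^2}{4\pi}((2\pi k)^2+|\rho|^2)}\le C\,e^{-\frac{\epsilon^2}{4\pi}(k^2+|\rho|^2)}$, using $4\pi^2k^2\ge k^2$. This produces the first term of the claimed bound.

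It then remains to handle the mollification error $\widehat{(q-\psi_\epsilon)}_k(\rho)$, which I would bound by $\lVert q-\psi_\epsilon\rVert_{L^1}$ and, after writing $q-\psi_\epsilon=\int\chi_\epsilon(y)\,(q(\cdot)-q(\cdot-y))\,dy$, by $\int\chi_\epsilon(y)\,\lVert q-q(\cdot-y)\rVert_{L^1}\,dy$. To turn this into a genuine rate $\epsilon^\alpha$ I would use the Sobolev regularity of $q$: the extended potential, obtained by even reflection across $\{x_n=0\}$ and zero extension, lies in $H^\sigma(\R^{1+n})$ for every fixed $\sigma\in(0,\tfrac12)$, since below regularity $\tfrac12$ the zero extension is harmless and the non-vanishing of $q$ on $\gamma_1$ costs nothing. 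Together with the compact support in $x'$, the $L^2$ translation estimate then gives $\lVert q-q(\cdot-y)\rVert_{L^1}\le C|y|^\sigma$, and the concentration of $\chi_\epsilon$ at scale $\epsilon$ yields $\lVert q-\psi_\epsilon\rVert_{L^1}\le C\epsilon^\sigma$. Taking $\alpha=\sigma$ completes the proof.

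I expect the only delicate point to be this last, quantitative, error estimate: one must pin down the fractional Sobolev regularity that survives the reflection and the zero extension of the extended coefficient, and verify that the periodic mollification in the first variable is genuinely compatible with the Fourier-series definition of $\hat q_k$. Both steps are routine but must be carried out with care; the conceptual content of the lemma is merely the trade-off between the Gaussian factor, small at high frequency $k^2+|\rho|^2$, and the mollification error, small as $\epsilon\to 0$, which will be optimized by a suitable later choice of $\epsilon$.
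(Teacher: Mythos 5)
Your overall architecture coincides with the paper's own proof: the paper likewise reduces the lemma to an $L^1$ translation estimate $\lVert \tilde q(\cdot+y)-\tilde q\rVert_{L^1(\R^{1+n})}\leq C|y|^{\alpha}$ for a compactly supported extension $\tilde q$ of $q$, and then invokes \cite[Lemma 2.1]{HW}, whose content is exactly the Gaussian mollification trade-off you spell out (crude $L^1$ bound paired with the Gaussian factor, plus a mollification error controlled by the translation estimate). Where you genuinely differ is the source of the translation estimate: the paper uses the Sobolev embedding $H^s\hookrightarrow C^{0,\alpha}$ (available since $s>\frac{1+n}{2}$), an explicit edge estimate for translations in $x_0$, and \cite[Lemma 2.2]{HW} to handle the jump of the zero extension across the boundary; you instead use that the zero extension lies in $H^\sigma(\R^{1+n})$ for every $\sigma<\frac12$, the elementary $L^2$ modulus-of-continuity bound, and Cauchy--Schwarz on a fixed compact support. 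That route is valid and needs strictly less regularity than the H\"older route.

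Two concrete points, the first of which is a step that fails as written. In this paper $k$ ranges over $\Z+\frac12$, not $\Z$ (see \eqref{definition-L}), so $x_0\mapsto e^{2\pi i k x_0}$ is anti-periodic of period $1$, and your periodic picture in the first variable breaks down: for half-integer $k$, the ``$k$-th Fourier coefficient'' of the period-$1$ periodization of the one-dimensional Gaussian $g_\epsilon$ is $\int_0^1 e^{2\pi i k t}\sum_{m\in\Z}g_\epsilon(t+m)\,dt=\sum_{m\in\Z}(-1)^m\int_m^{m+1}e^{2\pi i k s}g_\epsilon(s)\,ds$, which is \emph{not} $e^{-\frac{\epsilon^2}{4\pi}(2\pi k)^2}$, and the circular convolution theorem fails at these frequencies, so your key identity $\hat q_k=\widehat{\chi_\epsilon}\,\hat q_k+\widehat{(q-\psi_\epsilon)}_k$ does not hold in that picture. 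The fix is to drop periodization altogether: since $q_k(x')=\int_0^1 e^{2\pi i k x_1}q\,dx_1$ equals the integral over all of $\R$ of the zero extension in $x_1$, the quantity $\hat q_k(\rho)$ is literally the $(1+n)$-dimensional Fourier transform of the compactly supported $\tilde q$ at frequency $(2\pi k,\rho)$ (up to sign conventions), and ordinary mollification on $\R^{1+n}$ --- which is precisely the paper's viewpoint via \cite[Lemma 2.1]{HW} --- gives your identity verbatim, including the bound $e^{-\frac{\epsilon^2}{4\pi}(4\pi^2k^2+|\rho|^2)}\leq e^{-\frac{\epsilon^2}{4\pi}(k^2+|\rho|^2)}$. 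Second, note what your weaker hypotheses cost: your exponent is $\alpha=\sigma<\frac12$ no matter how smooth $q$ is. This proves the lemma as stated (any $\alpha>0$ suffices there), but it is too weak for the way the lemma is used in Section \ref{estimate}, where the paper sets $\tilde\alpha=1-1/(2\alpha)$ and needs $\tilde\alpha>0$, i.e.\ $\alpha>\frac12$; the H\"older route can deliver $\alpha$ up to $\min(s-\frac{1+n}{2},1)$. One can rebalance the later choice of $\epsilon$ so that any $\alpha>0$ suffices, but as the argument stands your constant would not plug in downstream.
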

\begin{proof}
%We will apply .
Since $q\in H^s(\check\omega)$, $s>(1+n)/2$ it follows that there is an $\alpha>0$ such that $q\in C^{0,\alpha}(\overline{\check\omega})$. We will denote by $\tilde q$ the extension by zero of $q$ to $\R^{1+n}$. First we estimate, for $|y_0|<1$, 
\begin{equation}
||\tilde q(\cdot+y_0,\cdot)-\tilde q(\cdot)||_{L^1(\R^{1+n})}=\left(\int_{1-|y_0|}^1 +\int_0^{|y_0|}\right)\int_\omega|q|\leq 2||q||_{L^\infty(\check\omega)}|\omega|\;|y_0|.
\end{equation}
Applying \cite[Lemma 2.2]{HW} (see also \cite[Lemma 2.4]{CH})we also obtain that there exists $C,\delta>0$ such that if $y'\in\R^n$, $|y'|<\delta$ then
\begin{equation}
||\tilde q(\cdot,\cdot+y')-\tilde q(\cdot)||_{L^1(\R^{1+n})}\leq C|y|^\alpha.
\end{equation}
Using the triangle inequality we can conclude that for $y=(y_0,y')$
\begin{equation}
||q(\cdot+y)-q(\cdot)||_{L^1(\R^{1+n})}\leq C|y|^\alpha.
\end{equation}
We can then apply \cite[Lemma 2.1]{HW} to obtain the conclusion.
\end{proof}

A consequence of this is that 
\begin{equation}
|I_2|\leq C \left[\exp\left[-\frac{\epsilon^2}{4\pi}\left(k^2+\kappa^2\right)\right]+\epsilon^\alpha\right].
\end{equation}

On the other hand
\begin{multline}
\left\lvert \langle (\Lambda_{q_1,\check{\gamma}_1,\theta} - \Lambda_{q_2,\check{\gamma}_1,\theta}) v_1, v_2 \rangle \right\rvert\leq ||\Lambda_{q_1,\check{\gamma}_1,\theta} - \Lambda_{q_2,\check{\gamma}_1,\theta}||_\ast ||v_1||_{\mathscr{H}(\check\gamma)}||v_2||_{L^2(\check\gamma)}\\
\leq C  ||\Lambda_{q_1,\check{\gamma}_1,\theta} - \Lambda_{q_2,\check{\gamma}_1,\theta}||_\ast ||v_1||_{L^2(\check\omega)}||v_2||_{L^2(\check\omega)}\\
\leq e^{2|\xi|\tau}||\Lambda_{q_1,\check{\gamma}_1,\theta} - \Lambda_{q_2,\check{\gamma}_1,\theta}||_\ast.
\end{multline}
We have here used the fact that 
$$||v_1||_{\mathscr{H}_\theta(\check\gamma)}\leq C||v_1||_{H_{\triangle,\theta}(\check\omega)}\leq C(||v_1||_{L^2(\check\omega)}+||q_1v_1||_{L^2(\check\omega)})\leq C||v_1||_{L^2(\check\omega)}.$$

Putting together the above estimates and the fact that
\begin{equation}
|\kappa|\geq \frac{4\pi^2k r}{|\eta|^2}|\eta-\eta^\ast|, \quad \tau>2\pi r,
\end{equation}
we obtain
\begin{equation}\label{apriori-est}
|\hat q_k(\eta)|\leq C \left[  e^{2\tau}||\Lambda_{q_1,\check{\gamma}_1,\theta} - \Lambda_{q_2,\check{\gamma}_1,\theta}||_\ast +
   \exp\left[-\frac{2\epsilon^2 r^2k^2}{|\eta|^4} |\eta-\eta^\ast|^2 \right]+\epsilon^\alpha
+\frac{1}{r}  \right],
\end{equation}
where $C$ is a constant depending on $n$, $\check\omega$, $M_\pm$.

\section{Stability estimate}

\subsection{Case (a), $\gamma_0\subset\{x_n=0\}$}\label{estimate}

We need the following  lemma:
\begin{lemma}[see \cite{CKS_1}, Lemma 6.3]\label{Technical-lemma}
	Let $q \in L^2((0,1)\times\R^n)$. 
	Then there exists $C>0$ such that 
	\begin{equation}\begin{aligned}
		\lVert q\rVert_{H^{-1}((0,1)\times \R^n)} 
		\leq C\left\lVert \sum_{k\in \mathbb{Z}} \left(1+\lvert (k,\cdot)\rvert^2\right)^{-\frac{1}{2}} \widehat{q_k}(\cdot) \right\rVert_{L^2(\R^n)}.
	\end{aligned}\end{equation}
\end{lemma}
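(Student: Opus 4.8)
The plan is to pass to the Fourier side in both variables and reduce the inequality to a weighted estimate on the coefficients $\widehat{q_k}$. Writing $q(x_1,x')=\sum_{k\in\Z}q_k(x')e^{-2\pi i k x_1}$ for the Fourier series in the periodic variable and taking the Fourier transform in $x'\in\R^n$, Plancherel's theorem (used in $x_1$ over one period and in $x'$ over $\R^n$) identifies the negative-order norm on the cylinder, up to the factor $2\pi$ between the mode index $k$ and the physical frequency, with
\begin{equation*}
\|q\|_{H^{-1}((0,1)\times\R^n)}^2 \;\asymp\; \sum_{k\in\Z}\int_{\R^n}\bigl(1+|(k,\eta)|^2\bigr)^{-1}\,|\widehat{q_k}(\eta)|^2\,\D\eta .
\end{equation*}
Thus the weight $\bigl(1+|(k,\cdot)|^2\bigr)^{-1/2}$ on the right-hand side is exactly the symbol of $(1-\triangle)^{-1/2}$ acting mode by mode, and this identity (or the corresponding norm equivalence, which produces the constant $C$) is the step that converts the abstract dual norm into something computable from the $\widehat{q_k}$. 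First I would record it carefully.

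With both sides written through $\widehat{q_k}$, the estimate becomes a pointwise-in-$\eta$ comparison: setting $b_k(\eta)=\bigl(1+|(k,\eta)|^2\bigr)^{-1/2}|\widehat{q_k}(\eta)|\ge 0$, one has the elementary inequality $\sum_k b_k^2\le\bigl(\sum_k b_k\bigr)^2$, and integrating in $\eta$ controls the weighted $\ell^2$–sum (hence the left-hand side) by the squared weighted $\ell^1$–sum built from the $\widehat{q_k}$. Taking absolute values mode by mode is precisely the form in which the lemma is invoked, since it is applied by substituting the pointwise bounds on $|\widehat{q_k}(\eta)|$ coming from \eqref{apriori-est}. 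I would also note that, because the weight decays only like $|k|^{-1}$, the harmonic sum $\sum_k\bigl(1+|(k,\eta)|^2\bigr)^{-1/2}$ diverges, so the right-hand side is finite (and the bound meaningful) only once the $H^s$ hypothesis on $q$ forces rapid decay of $\widehat{q_k}(\eta)$ in $k$.

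The step I expect to be the main obstacle is the first one: justifying the Fourier characterization of $\|q\|_{H^{-1}((0,1)\times\R^n)}$ as the weighted sum above. Since $H^{-1}$ is the dual of $H^1_0$ and $(0,1)$ is a genuine interval rather than a torus, the zero-extension of $q$ to $\R^{1+n}$ has a Fourier transform $Q(\xi_1,\eta)$ that is a Paley–Wiener function in $\xi_1$ whose samples on the lattice $2\pi\Z$ are exactly the $\widehat{q_k}(\eta)$. Reconciling the continuous Sobolev norm with the discrete sum over $k$ therefore requires controlling the boundary contributions from the endpoints $x_1\in\{0,1\}$, which I would handle by exploiting this band-limited structure through a sampling/periodization argument; this is also what legitimizes treating the restriction to one period with periodic modes. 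Once the norm identification is in place, the remaining sequence inequality is routine.
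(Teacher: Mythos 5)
The paper gives no proof of this lemma at all --- it is imported verbatim from \cite{CKS_1}, Lemma~6.3 --- so your proposal stands or falls on its own. Its skeleton is sound, and it correctly identifies what the statement really encodes: the substantive inequality is the weighted $\ell^2$-in-$k$ bound
\begin{equation*}
\lVert q\rVert^2_{H^{-1}((0,1)\times\R^n)}\;\leq\; C\sum_{k\in\Z}\int_{\R^n}\bigl(1+\lvert(k,\eta)\rvert^2\bigr)^{-1}\lvert\widehat{q_k}(\eta)\rvert^2\,\D\eta,
\end{equation*}
which is in fact the form in which the paper applies the lemma, and your second step --- $\sum_k b_k^2\leq\bigl(\sum_k b_k\bigr)^2$ for $b_k\geq0$, pointwise in $\eta$, then integrate --- correctly converts this into the stated $\ell^1$-type form, read (as you note) with $\lvert\widehat{q_k}\rvert$ inside the sum.

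The gap is the step you yourself defer, and the route you sketch for it does not merely need care: it fails. Your plan is to bound $\lVert q\rVert_{H^{-1}((0,1)\times\R^n)}$ by $\lVert\tilde q\rVert_{H^{-1}(\R^{1+n})}$ (zero extension; this direction is fine) and then control that continuous norm by the samples $\widehat{q_k}(\eta)$ of $\hat{\tilde q}$ via Paley--Wiener/sampling. But the inequality $\lVert\tilde q\rVert_{H^{-1}(\R^{1+n})}\leq C\bigl(\sum_k\int(1+\lvert(k,\eta)\rvert^2)^{-1}\lvert\widehat{q_k}\rvert^2\D\eta\bigr)^{1/2}$ is false: zero extension genuinely inflates negative norms at the endpoints. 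Concretely, in the $x_1$ variable (tensor with a fixed bump in $x'$), take $\phi$ smooth, odd, supported in $[-1/4,1/4]$, with $\int_0^\infty\phi=m\neq0$, and let $q_\ep$ be the restriction to $(0,1)$ of the $1$-periodic function $\ep^{-1}\phi(x_1/\ep)$. Its Fourier coefficients are samples $\hat\phi(2\pi k\ep)$, and since $\hat\phi(0)=0$ and $\lvert\hat\phi(\zeta)\rvert\leq C\min(\lvert\zeta\rvert,\lvert\zeta\rvert^{-1})$ one gets $\sum_k(1+k^2)^{-1}\lvert\hat\phi(2\pi k\ep)\rvert^2\leq C\ep$; yet $\tilde q_\ep$ is a pair of bumps of masses $+m$ and $-m$ at distance $1$ apart, so pairing with a fixed cutoff equal to $1$ near $x_1=0$ and $0$ near $x_1=1$ shows $\lVert\tilde q_\ep\rVert_{H^{-1}(\R)}\geq c\lvert m\rvert$ uniformly in $\ep$. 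So the endpoint contributions are an obstruction on your route, not a technicality. The repair is to periodize the test function rather than extend $q$: for $\psi\in H^1_0((0,1)\times\R^n)$, integration by parts gives $\widehat{(\partial_{x_1}\psi)_k}=-2\pi ik\,\widehat{\psi_k}$ with \emph{no} boundary terms precisely because $\psi$ has zero trace at $x_1=0,1$; then Parseval for the Fourier series in $x_1$ and Plancherel in $x'$ give $\int q\bar\psi=\sum_k\int\widehat{q_k}\overline{\widehat{\psi_k}}$, and Cauchy--Schwarz with the weights $(1+\lvert(k,\eta)\rvert^2)^{\pm1/2}$ yields $\bigl\lvert\int q\bar\psi\bigr\rvert\leq C\bigl(\sum_k\int(1+\lvert(k,\eta)\rvert^2)^{-1}\lvert\widehat{q_k}\rvert^2\bigr)^{1/2}\lVert\psi\rVert_{H^1}$. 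Taking the supremum over $\lVert\psi\rVert_{H^1_0}\leq1$ proves the display above, and your $\ell^2\leq\ell^1$ step finishes. Finally, drop the claim that the characterization is an equivalence ($\asymp$): only the direction just proved is needed, and the duality argument gives only that direction, since test functions range over $H^1_0$ rather than all periodic $H^1$ functions.
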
 

%\begin{proof}
%	The proof is similar to the proof of \cite[Lemma-6.3]{CKS_1}, however, for the sake of completeness we present the proof here.
%	
%	Let $C>0$ satisfying 
%	\begin{equation}\begin{aligned}
%		\left\lVert \sum_{k \in \mathbb{Z}} (1 + \lvert (k,\cdot) \rvert^2)^{\frac{1}{2}} \hat{w}_k \right\rVert_{L^2(\R^n)} 
%		\leq C \lVert w \rVert_{H^1((0,1) \times \R^n)},
%	\end{aligned}\end{equation}
%	for all $w \in H^1_0((0,1) \times \R^n)$. For $v \in L^2((0,1)\times \R^n)$ we have 
%	\begin{equation}\begin{aligned}
%		\langle v, w \rangle_{H^{-1}((0,1)\times \R^n), H^{1}_{0}((0,1)\times \R^n)}
%		= \langle v, w \rangle_{L^2((0,1)\times \R^n)} 
%		= \int_{\R^n} \sum_{k\in \mathbb{Z}} \hat{v}_k(\eta) \overline{\hat{w}_k(\eta)} \D \eta.
%	\end{aligned}\end{equation}
%	So we get 
%	\begin{equation}\begin{aligned}
%		\left\lvert \langle v, w \rangle_{H^{-1}((0,1)\times \R^n), H^{1}_{0}((0,1)\times \R^n)} \right\rvert
%		\leq C\left\lVert \sum_{k \in\mathbb{Z}} (1+\lvert (k,\cdot) \rvert^2)^{\frac{1}{2}}\hat{v}_k \right\rVert_{L^2(\R^n)} \lVert w \rVert_{H^1((0,1)\times \R^n)}.
%	\end{aligned}\end{equation}
%\end{proof}

Then
\begin{equation}\begin{aligned}
\lVert q \rVert^2_{H^{-1}((0,1)\times \R^n)} 
\leq& C\int_{\R^{1+n}} \left(1+\lvert (k,\eta)\rvert^2\right)^{-1} \lvert\widehat{q_k}(\eta)\rvert^2 \D \mu(k) \D \eta,
\end{aligned}\end{equation}
where $\mu(k) = \sum_{n\in\mathbb{Z}} \delta_{n}$.
Let $$B_{\rho} = \{ (k,\eta) \in \R^{1+n} : |k|<\rho, |\eta+\eta^\ast|<\rho,|\eta-\eta^\ast|<\rho\}.$$
Then we have
\begin{equation}\label{critical integrations}
\begin{aligned}
\lVert q \rVert^2_{H^{-1}((0,1)\times \R^n)} 
\leq& C\int_{\R^{1+n} \setminus B_{\rho}} \left(1+\lvert (k,\eta)\rvert^2\right)^{-1} \lvert\widehat{q_k}(\eta)\rvert^2 \D \mu(k) \D \eta\\
&+ C\int_{B_{\rho}} \left(1+\lvert (k,\eta)\rvert^2\right)^{-1} \lvert\widehat{q_k}(\eta)\rvert^2 \D \mu(k) \D \eta.
\end{aligned}
\end{equation}

The first integral in \eqref{critical integrations} is easy to estimate:
\begin{multline}
\int_{\R^{1+n} \setminus B_{\rho}} \left(1+\lvert (k,\eta)\rvert^2\right)^{-1} \lvert\widehat{q_k}(\eta)\rvert^2 \D \mu(k) \D \eta\\
\leq \frac{C}{\rho^2} \int_{\R^{1+n} \setminus B_{\rho}} \lvert\widehat{q_k}(\eta)\rvert^2 \D \mu(k) \D \eta
\leq \frac{C}{\rho^2}\lVert q \rVert^2_{L^{\infty}(\Omega)}.
\end{multline}
To estimate the second integral, we use \eqref{apriori-est} and the fact that on $B_\rho$
\begin{equation}
\tau\leq 100 (r+\rho)
\end{equation}
 to obtain:
\begin{multline}
\int_{B_{\rho}} \left(1+\lvert (k,\eta)\rvert^2\right)^{-1} \lvert\widehat{q_k}(\eta)\rvert^2 \D \mu(k) \D \eta\\
\leq C [  \rho^{1+n}e^{200(r+\rho))} ||\Lambda_{q_1,\check{\gamma}_1,\theta} - \Lambda_{q_2,\check{\gamma}_1,\theta}||_\ast
+\epsilon^{2\alpha}\rho^{1+n}\\
+\rho^{1+n}r^{-1}+\rho^n \int _{-\rho}^\rho \exp\left[-\frac{\epsilon^2 r^2}{\rho^4} s^2\right] \D s]\\
\leq C [  \rho^{1+n}e^{200(r+\rho)} ||\Lambda_{q_1,\check{\gamma}_1,\theta} - \Lambda_{q_2,\check{\gamma}_1,\theta}||_\ast
+\epsilon^{2\alpha}\rho^{1+n}+\rho^{1+n}r^{-1}+\rho^{n+2}\epsilon^{-1} r^{-1}].
\end{multline}

We can choose $\epsilon$ such that $\epsilon^{2\alpha}=r^{-1}$. Then 
\begin{equation}
\epsilon^{2\alpha}\rho^{1+n}+\rho^{1+n}r^{-1}+\rho^{n+2}\epsilon^{-1} r^{-1}\leq
C[\rho^{1+n}r^{-1}+\rho^{n+2}r^{-1+\frac{1}{2\alpha}}]\leq C \rho^{n+2}r^{-\tilde\alpha},
\end{equation}
with $\tilde\alpha=1-1/(2\alpha)$. Next we choose $r=\rho^{\frac{4+n}{\tilde\alpha}}$. With this choice, going back to \eqref{critical integrations} we obtain
\begin{equation}
\lVert q \rVert^2_{H^{-1}((0,1)\times \R^n)}\leq C\left[           
\rho^{1+n}\exp[400\rho^{\frac{4+n}{\tilde\alpha}}] ||\Lambda_{q_1,\check{\gamma}_1,\theta} - \Lambda_{q_2,\check{\gamma}_1,\theta}||_\ast +\rho^{-2}
\right]
\end{equation}
To finish the estimate, we can choose $\rho$ so that
\begin{equation}
\rho^{1+n}\exp[400\rho^{\frac{4+n}{\tilde\alpha}}] ||\Lambda_{q_1,\check{\gamma}_1,\theta} - \Lambda_{q_2,\check{\gamma}_1,\theta}||_\ast =\rho^{-2}.
\end{equation}
In this case clearly there exists a $\gamma(n,\tilde\alpha)>0$ so
\begin{equation}
\rho\geq \left|\log ||\Lambda_{q_1,\check{\gamma}_1,\theta} - \Lambda_{q_2,\check{\gamma}_1,\theta}||_\ast\right|^{\frac{\gamma}{2}}.
\end{equation}
This gives us the estimate
\begin{equation}
\lVert q \rVert^2_{H^{-1}((0,1)\times \R^n)}\leq C \left|\log ||\Lambda_{q_1,\check{\gamma}_1,\theta} - \Lambda_{q_2,\check{\gamma}_1,\theta}||_\ast\right|^{-\gamma}.
\end{equation}

Since we are additionally assuming that $||q_j||_{H^s(\check\omega)}<N$, for and $s$ that can be written as $s=\frac{1+n}{2}+2\epsilon$, by interpolation we have that there exists $\tau\in(0,1)$ such that
\begin{equation}
||q_1-q_2||_{L^\infty(\check\omega)}\leq ||q_1-q_2||_{H^{\frac{1+n}{2}+\epsilon}(\check\omega)}\leq
 ||q_1-q_2||^\tau_{H^{-1}(\check\omega)}||q_1-q_2||^{1-\tau}_{H^s(\check\omega)}.
\end{equation}
Our desired result now follows trivially.

\subsection{Case (b), $\gamma_0\subset\{|x'-a|=R\}$}\label{kelvin}

Without loss of generality $a=(0,\ldots,0,R)\in\R^n$ and $0\not\in\overline{\omega}$. Following \cite{Isakov2007}, \cite{HW}, we employ the (partial) Kelvin transform 
\begin{equation}
y'=\left(\frac{2R}{|x'|}\right)^2x', \quad y_0=x_0,
\end{equation}
whose inverse is
\begin{equation}
x'=\left(\frac{2R}{|y'|}\right)^2y', \quad x_0=y_0.
\end{equation}
Let $\tilde\omega$, $\tilde\gamma$, $\tilde\gamma_0$, $\tilde\gamma_1$ be the images of $\omega$, $\gamma$, $\gamma_0$, $\gamma_1$ through this transform. Then $\tilde\gamma_0\subset\{y_n=2R\}$, $\tilde\gamma_1=\tilde\gamma\cap\{y_n>2R\}$, so the transformed domain $\tilde\omega$ satisfies the conditions of case (a).

For a function $u(x)$ we define
\begin{equation}
\tilde u(y)=\left(\frac{2R}{|y'|}\right)^{n-2}u(x(y)).
\end{equation}
Note that
\begin{equation}
\left(\frac{|y'|}{2R}\right)^{n+2}\triangle_y\tilde u(y)=\triangle_x u(x).
\end{equation}
If $-\triangle u+qu=0$, then
\begin{equation}
-\triangle\tilde u+\tilde q\tilde u=0,\quad \tilde q(y)=\left(\frac{2R}{|y'|}\right)^4 q(x(y)).
\end{equation}

Let $f\in \mathscr{H}_\theta(\check\gamma_1)$, then there exists $u\in H_{\Delta,\theta}(\check\omega)$ such that $u|_{\check\gamma_1}=f$, $\triangle u=0$. We notice that $\tilde u|_{\check{\tilde{\gamma_1}}}=\tilde f$ and $\triangle\tilde u=0$. 
Since $\left({2R}/{|y'|}\right)^{n-2}$ is a bounded positive function on $\check{\tilde{\omega}}$, there are constants $C',C''>0$ such that
\begin{equation}
C'||u||_{L^2(\check{{\omega}})}\leq ||\tilde u||_{L^2(\check{\tilde{\omega}})}\leq C''||u||_{L^2(\check{{\omega}})}.
\end{equation}
So 
\begin{equation}
C'||f||_{\mathscr{H}_\theta(\check{{\gamma_1}})}\leq ||\tilde f||_{\mathscr{H}_\theta(\check{\tilde{\gamma_1}})}\leq C''||f||_{\mathscr{H}_\theta(\check{{\gamma_1}})}.
\end{equation}
Similarly, for any $g\in L^2(\check\gamma_1)$, 
\begin{equation}
C'||g||_{L^2(\check{{\gamma_1}})}\leq ||\tilde g||_{L^2(\check{\tilde{\gamma_1}})}\leq C''||g||_{L^2(\check{{\gamma_1}})}.
\end{equation}
It follows then that the norms $||\Lambda_{q_1,\check\gamma_1,\theta}-\Lambda_{q_2,\check\gamma_1,\theta}||_\ast$ and
$||\Lambda_{q_1,\check{\tilde{\gamma}}_1,\theta}-\Lambda_{q_2,\check{\tilde{\gamma}}_1,\theta}||_\ast$ are equivalent, i.e. that there are constants $C',C''>0$ such that
\begin{equation}
C'||\Lambda_{q_1,\check\gamma_1,\theta}-\Lambda_{q_2,\check\gamma_1,\theta}||_\ast
\leq       ||\Lambda_{q_1,\check{\tilde{\gamma}}_1,\theta}-\Lambda_{q_2,\check{\tilde{\gamma}}_1,\theta}||_\ast
\leq C'' ||\Lambda_{q_1,\check\gamma_1,\theta}-\Lambda_{q_2,\check\gamma_1,\theta}||_\ast.
\end{equation}
With this observation, we see that the stability estimate we have proved for case (a) implies the one for case (b).

\paragraph{Acknowledgement} Work on this paper began in March 2017, when the first author visited the National Taiwan University National Center for Theoretical Sciences (NCTS), where the second author was employed at the time. We wish to acknowledge the support that NCTS provided towards making this work possible.

\bibliographystyle{plain}
\bibliography{bib-stability-v2}
\end{document}